\pgfplotsset{compat=1.17}
\newtheorem{prop}{Proposition}
\newtheorem{lemma}[prop]{Lemma}
\newtheorem{theorem}[prop]{Theorem}
\newtheorem{corollary}[prop]{Corollary}
\theoremstyle{definition}
\newtheorem{definition}[prop]{Definition}
\newtheorem{remark}[prop]{Remark}
\newtheorem{example}[prop]{Example}
\newtheorem{conjecture}[prop]{Conjecture}
\newcommand{\N}{\mathbb{N}}
\newcommand{\R}{\mathbb{R}}
\newcommand{\seqnum}[1]{\href{https://oeis.org/#1}{\rm \underline{#1}}}
\newcommand{\mylabel}[2]{#2\def\@currentlabel{#2}\label{#1}}
\newcommand{\des}{\textnormal{des}}
\newcommand{\cycdes}{\textnormal{cycdes}}
\newcommand{\lev}{\textnormal{lev}}
\newcommand{\cyclev}{\textnormal{cyclev}}
\begin{document}
\tikzset{mystyle/.style={matrix of nodes,
        nodes in empty cells,
        row 1/.style={nodes={draw=none}},
        row sep=-\pgflinewidth,
        column sep=-\pgflinewidth,
        nodes={draw,minimum width=1cm,minimum height=1cm,anchor=center}}}
\tikzset{mystyleb/.style={matrix of nodes,
        nodes in empty cells,
        row sep=-\pgflinewidth,
        column sep=-\pgflinewidth,
        nodes={draw,minimum width=1cm,minimum height=1cm,anchor=center}}}

\title{The total number of descents and levels in tensor words and cyclic tensor words}

\author[SELA FRIED]{Sela Fried$^{\dagger}$}
\thanks{$^{\dagger}$ Department of Computer Science, Israel Academic College,
52275 Ramat Gan, Israel.
\\
\href{mailto:friedsela@gmail.com}{\tt friedsela@gmail.com}}
\author[TOUFIK MANSOUR]{Toufik Mansour$^{\sharp}$}
\thanks{$^{\sharp}$ Department of Mathematics, University of Haifa, 3103301 Haifa,
Israel.\\
\href{mailto:tmansour@univ.haifa.ac.il}{\tt tmansour@univ.haifa.ac.il}}

\maketitle

\begin{abstract}
We obtain an explicit formula for the total number of descents and levels in tensor words and cyclic tensor words of arbitrary dimension. We also establish the exact maximal number of descents in words and use it to obtain an upper bound on the maximal number of descents in tensor words.
\bigskip

\noindent \textbf{Keywords:} level, descent, generating function, tensor, word.
\smallskip

\noindent
\textbf{Math.~Subj.~Class.:} 05A05, 05A15, 15A69.
\end{abstract}

\section{Introduction}

Let $k$ and $n$ be two natural numbers and let $[k]=\{1,2,\ldots,k\}$. Recall that if $w=w_1\cdots w_n\in[k]^n$ is a word over the alphabet $[k]$, then $1\leq i\leq n-1$ is a descent (resp.\ level) of $w$ if $w_{i}>w_{i+1}$ (resp.\ $w_{i}=w_{i+1}$). If the word is cyclic, then also $i=n$ is a descent (resp.\ level) of $w$ if $w_n > w_1$ (resp.\ $w_n=w_1$). One possibility for a generalization, that we pursue in this work, is to consider words in higher dimensions, thus regarding (standard) words as one-dimensional words. For example, if $m$ is an additional natural number, then two-dimensional words over $[k]$ are matrices $w=(w_{i,j})_{1\leq i\leq m,1\leq j\leq n}\in[k]^{mn}$. Here, a descent (resp.\ level) of $w$ is a pair of double indices $((i_1,j_1), (i_2,j_2))$ such that $w_{i_1,j_1}>w_{i_2,j_2}$ (resp.\ $w_{i_1,j_1}=w_{i_2,j_2}$) and either $(i_2,j_2)=(i_1+1,j_1)$ or $(i_2,j_2)=(i_1,j_1+1)$.

The purpose of this work is to establish a formula for the total number of descents (resp.\ levels) in multidimensional words of an arbitrary (but fixed) dimension. We refer to such words as tensor words, as formalized in Definition \ref{def;1}.

Descents and levels in words were studied, for example, in \cite{B, Kit}. Especially related to our work is the work of Mansour and Shattuck \cite{MS}, which studied common occurrences of patterns in matrix (i.e., two-dimensional) words. Nevertheless, they have not considered vertical descents (resp.\ levels), nor have they considered cyclic words. In contrast, Knopfmacher et al.\ \cite{K} considered cyclic words, but in a different context, namely that of staircase words.


Before we begin, let us introduce some notation. We denote by $\N$ (resp.\ $\R$) the set of natural (resp.\ real) numbers. For $n\in\N$, let $[n] = \{1,2,\ldots,n\}$ and, for $d\in\N$ and $\ell\in [d]$, we denote by $e_\ell$ the $\ell$th vector in the standard basis of $\R^d$. If $p$ is a condition, then $1_p$ equals $1$ if $p$ holds and $0$ otherwise. Fix $k,d\in\N$ and $m^{(d)}=(m_1,\ldots,m_d)\in\N^d$, to be used throughout this work.

\begin{definition}\label{def;1}
An \emph{$m^{(d)}$-tensor word over $[k]$} is a function $w\colon [m_1]\times\cdots\times [m_d]\to[k]$. The set of all $m^{(d)}$-tensor words over $[k]$ is denoted by $T(m^{(d)},k)$. A \emph{descent} (resp.\  \emph{level}) of an $m^{(d)}$-tensor word $w$ is a pair $(i,j)\in([m_1]\times\cdots\times [m_d])^2$, such that $w(i)>w(j)$ (resp.\ $w(i)=w(j)$) and $j=i+e_\ell$, for some $\ell\in[d]$. We denote by $\des(w)$ (resp.\ $\lev(w)$) the number of descents (resp.\ levels) of $w$. If the word $w$ is regarded as cyclic, then additional \emph{cyclic descents} (resp.\ \emph{cyclic levels}) are allowed, namely all $(i,j)\in([m_1]\times\cdots\times [m_d])^2$, such that there exists a unique $\ell\in[d]$ with $i_\ell =n, j_\ell=1$, and
$w(i)>w(j)$ (resp.\ $w(i)=w(j)$). Finally, we denote by $\cycdes(w)$ (resp.\ $\cyclev(w)$) the number of descents and cyclic descents (resp.\ levels and cyclic levels) of $w$.
\end{definition}

\begin{example}
Let
$$w=\begin{pmatrix}
3&1&2&1\\
2&2&2&2\\
1&2&3&3\\
\end{pmatrix}.$$
Then $w$ is a $(3,4)$-tensor word over $[3]$ (or every $k\geq 3$). The set of descents of $w$ is given by $\{((1,1),(1,2)), ((1,3),(1,4)), ((1,1),(2,1)),((2,1),(3,1))\}$ and the set of levels of $w$ is given by $$\{((2,1),(2,2)), ((2,2),(2,3)), ((2,3),(2,4)), ((3,3),(3,4)), ((2,2),(3,2)),((1,3),(2,3))\}.$$ Thus, $\des(w)=4$ and $\lev(w)=6$. The set of cyclic descents of $w$ is given by $$((3,4), (3,1)), ((3,2), (1,2)), ((3,3), (1,3)), ((3,4), (1,4))\}$$ and the set of cyclic levels of $w$ is given by $((2,4), (2,1))\}$. Thus, $\cycdes(w)=8$ and $\cyclev(w)=7$.
\end{example}

\section{Main results}

Our main results are as follows: Let $a_d$ (resp.\ $b_d$) denote the total number of descents (resp.\ levels) of all $m^{(d)}$-tensor words over $[k]$. Then $$a_d=\frac{1}{2}\left(d\prod_{\ell\in[d]}m_{\ell}-\sum_{i\in[d]}\prod_{\ell\in[d],\ell\neq i}m_{\ell}\right)(k-1)k^{\prod_{\ell\in[d]}m_{\ell}-1}$$ and $b_d=2a_d/(k-1)$, i.e., $$b_d=\left(d\prod_{\ell\in[d]}m_{\ell}-\sum_{i\in[d]}\prod_{\ell\in[d],\ell\neq i}m_{\ell}\right)k^{\prod_{\ell\in[d]}m_{\ell}-1}.$$
Similarly, let $f_d$ (resp.\ $g_d$) denote the total number of descents and cyclic descents (resp.\ levels and cyclic levels) of all $m^{(d)}$-tensor words over $[k]$. Then $$f_d=\frac{1}{2}\left(d\prod_{\ell\in[d]}m_{\ell}\right)(k-1)k^{\prod_{\ell\in[d]}m_{\ell}-1}$$ and $g_d=2f_d/(k-1)$, i.e., $$g_d=\left(d\prod_{\ell\in[d]}m_{\ell}\right)k^{\prod_{\ell\in[d]}m_{\ell}-1}.$$

In the last section we provide an upper bound on the maximal number of descents and cyclic descents, namely, we prove that, for every $w\in T(m^{(d)},k)$, we have
\[\cycdes(w)\leq\sum_{i\in[d]}\left(\prod_{\ell\in[d],\ell\neq i}m_{\ell}\right)\left\lfloor \frac{m_{i}(k-1)}{k}\right\rfloor.
\]

\subsection{Noncyclic words}
Let $m_{d+1}\in\N$ and let $\ell$ be a nonnegative integer. Set $m^{(d+1)}=(m_1,\ldots,m_{d+1})$. We denote by $D_{m^{(d)},m_{d+1},\ell}$ the number of $m^{(d+1)}$-tensor words $w$ such that $\des(w)=\ell$. For $w\in T(m^{(d)},k)$, we denote by $D_{m^{(d)},m_{d+1},\ell}(w)$ the number of $m^{(d+1)}$-tensor words $w'$, such that $\des(w')=\ell$ and such that $w'(i,1)=w(i)$, for every $i\in [m_1]\times\cdots\times [m_d]$. Let $F_{d+1}(t, x)=\sum_{m_{d+1}\geq 1}\sum_{\ell\geq 0} D_{m^{(d)},m_{d+1},\ell}t^\ell x^{m_{d+1}}$ be the generating function of the $D_{m^{(d)},m_{d+1},\ell}$s, and, for $w\in T(m^{(d)},k)$, we denote by $F_{d+1}(t,x,w)=\sum_{m_{d+1}\geq 1}\sum_{\ell\geq 0}D_{m^{(d)},m_{d+1},\ell}(w)t^\ell x^{m_{d+1}}$ the generating function of the $D_{m^{(d)},m_{d+1},\ell}(w)$s. Notice that $$F_{d+1}(1,x,w)=\frac{x}{1-k^{\prod_{\ell\in[d]} m_\ell}x}.$$

\begin{lemma}\label{lem;j1}
For $w\in T(m^{(d)},k)$, we have
\begin{align}
F_{d+1}(t,x,w)=\frac{xt^{\des(w)}}{1-xt^{\des(w)}}\left(1+\sum_{w'\in T(m^{(d)},k), w'\neq w}t^{\sum_{i\in[m_1]\times\cdots\times [m_d] } 1_{w(i)>w'(i)}}F_{d+1}(t,x,w')\right).\nonumber
\end{align}
\end{lemma}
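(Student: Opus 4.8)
The plan is a first-return-type decomposition along the last coordinate. Identify a word $w'\in T(m^{(d+1)},k)$ with its sequence of \emph{layers} $w'_1,\ldots,w'_{m_{d+1}}\in T(m^{(d)},k)$, where $w'_s(i)=w'(i,s)$ for $i\in[m_1]\times\cdots\times[m_d]$. Every descent $(i,j)$ of $w'$ with $j=i+e_\ell$ has either $\ell\in[d]$, a \emph{horizontal} descent lying inside a single layer, or $\ell=d+1$, a \emph{vertical} descent comparing $w'_s(i)$ with $w'_{s+1}(i)$ at a common position $i$. Counting the two kinds separately gives
\[
\des(w')=\sum_{s=1}^{m_{d+1}}\des(w'_s)+\sum_{s=1}^{m_{d+1}-1}\ \sum_{i\in[m_1]\times\cdots\times[m_d]}1_{w'_s(i)>w'_{s+1}(i)}.
\]

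Next I would fix $w\in T(m^{(d)},k)$ and sort the words $w'\in T(m^{(d+1)},k)$ with $w'_1=w$ by the largest index $s\geq1$ for which $w'_1=\cdots=w'_s=w$. If $s=m_{d+1}$, all layers equal $w$, so the vertical sums above vanish and $\des(w')=s\,\des(w)$; these words contribute $\sum_{s\geq1}(xt^{\des(w)})^s=xt^{\des(w)}/(1-xt^{\des(w)})$ to $F_{d+1}(t,x,w)$. If $s<m_{d+1}$, set $w''=w'_{s+1}$ (so $w''\neq w$) and let $v\in T(m^{(d+1)},k)$ be the word whose layers are $w'_{s+1},\ldots,w'_{m_{d+1}}$; then $v_1=w''$ and $v$ has $m_{d+1}-s\geq1$ layers. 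In the displayed identity the vertical terms strictly inside the run $w'_1,\ldots,w'_s$ are all $0$, the vertical term between $w'_s=w$ and $w'_{s+1}=w''$ equals $\sum_i1_{w(i)>w''(i)}$, the horizontal terms of $w'_1,\ldots,w'_s$ sum to $s\,\des(w)$, and everything from layer $s+1$ onward reassembles exactly into $\des(v)$, so
\[
\des(w')=s\,\des(w)+\sum_i1_{w(i)>w''(i)}+\des(v),
\]
while $w'$ has $s$ more layers than $v$. The map $w'\mapsto(s,w'',v)$ is then a bijection from the $s<m_{d+1}$ words onto all triples with $s\geq1$, $w''\in T(m^{(d)},k)\setminus\{w\}$, and $v\in T(m^{(d+1)},k)$ satisfying $v_1=w''$; the two cases are disjoint and together exhaust all $w'$ with $w'_1=w$.

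Given this, the lemma follows by a routine computation. Translating the $s<m_{d+1}$ case into generating functions, those words contribute $\bigl(\sum_{s\geq1}(xt^{\des(w)})^s\bigr)\sum_{w''\neq w}t^{\sum_i1_{w(i)>w''(i)}}F_{d+1}(t,x,w'')$, since for fixed $s$ and $w''$ the inner sum over $v$ with $v_1=w''$ is precisely $F_{d+1}(t,x,w'')$; summing the geometric series and adding the $s=m_{d+1}$ contribution $xt^{\des(w)}/(1-xt^{\des(w)})$ produces exactly the claimed functional equation. I expect the one genuinely delicate step to be the descent bookkeeping behind the two displayed identities: one must check that horizontal descents really do split layer-by-layer, that the vertical comparisons within an opening run of identical layers contribute nothing (equal values are not descents), that the single comparison across the boundary contributes precisely $\sum_i1_{w(i)>w''(i)}$, and that the descents from layer $s+1$ on are exactly those of the shifted tail $v$; everything after that is formal manipulation. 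As a sanity check, setting $t=1$ recovers $F_{d+1}(1,x,w)=x/(1-k^{\prod_{\ell\in[d]}m_\ell}x)$, consistent with the remark preceding the lemma.
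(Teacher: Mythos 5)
Your proof is correct. It differs from the paper's in how the geometric factor $xt^{\des(w)}/(1-xt^{\des(w)})$ arises: the paper peels off only the \emph{first} layer, obtaining the one-step recurrence
\[
F_{d+1}(t,x,w)=t^{\des(w)}x+x\sum_{w'\in T(m^{(d)},k)}t^{\des(w)+\sum_{i} 1_{w(i)>w'(i)}}F_{d+1}(t,x,w'),
\]
and then gets the stated form algebraically, by observing that the $w'=w$ summand carries exponent $\sum_i 1_{w(i)>w(i)}=0$ and moving that term to the left-hand side before dividing by $1-xt^{\des(w)}$. You instead unroll that algebra combinatorially, decomposing each word by the length $s$ of its maximal initial run of layers equal to $w$, so the geometric series $\sum_{s\ge1}(xt^{\des(w)})^s$ appears directly and the lemma's closed form drops out without any rearrangement. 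The two decompositions encode the same recursion; the paper's is marginally shorter to state (and its intermediate equation is reused verbatim when differentiating at $t=1$ in Theorem \ref{thm;5s}), while yours makes transparent why the prefactor is a geometric series in $xt^{\des(w)}$ and why the sum in the lemma excludes $w'=w$. Your descent bookkeeping — horizontal descents split layer-by-layer, vertical comparisons inside the constant run contribute nothing, the boundary contributes $\sum_i 1_{w(i)>w''(i)}$, and the tail reassembles into $\des(v)$ — is exactly the verification the paper performs for its one-step version, so nothing is missing.
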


\begin{proof}
It suffices to show that the generating function $F_{d+1}(t,x,w)$ satisfies the equation
\begin{equation}\label{k1}
F_{d+1}(t,x,w)=t^{\des(w)}x+x\sum_{w'\in T(m^{(d)},k)}t^{\des(w)+\sum_{i\in[m_1]\times\cdots\times [m_d] } 1_{w(i)>w'(i)}}F_{d+1}(t,x,w').
\end{equation} Indeed, let $v\in T(m^{(d+1)},k)$ such that $v(i,1)=w(i)$, for every $i\in [m_1]\times\cdots\times [m_d]$. If $m_{d+1}=1$ then there are no descents in the direction of $e_{d+1}$. This case corresponds to the first term on the right-hand side of \eqref{k1}. Assume that $m_{d+1}>1$ and let $w'\in T(m^{(d)},k)$ such that $v(i,2)=w'(i)$, for every $i\in [m_1]\times\cdots\times [m_d]$. Then $u$, defined by $u(i,j)=v(i,j+1)$ for every $i \in[m_1]\times\cdots\times [m_d]$ and $j\in[m_{d+1}-1]$, is an  $m^{(d+1)}$-tensor word, such that $u(i,1)=w'(i)$, for every $i\in [m_1]\times\cdots\times [m_d]$. Now, for $\ell\geq0$, we have $\des(v)=\ell$ if and only if $\ell = \des(w) + \sum_{i\in[m_1]\times\cdots\times [m_d] } 1_{w(i)>w'(i)} + \des(u)$. This corresponds to the second term on the right-hand side of \eqref{k1} and the proof is complete.
\end{proof}

\begin{theorem}\label{thm;5s}
Define $a_d=\sum_{w\in T(m^{(d)},k)}\des(w)$ and let $A_d(x)$ be the corresponding generating function. Then
\begin{equation}\label{eq;71}
a_d=\frac{1}{2}\left(d\prod_{\ell\in[d]}m_{\ell}-\sum_{i\in[d]}\prod_{\ell\in[d],\ell\neq i}m_{\ell}\right)(k-1)k^{\prod_{\ell\in[d]}m_{\ell}-1}
\end{equation}
and
\begin{equation}\label{eq;72}
A_{d+1}(x)=\frac{a_{d}x+\frac{1}{2}(k-1)k^{2\prod_{\ell\in[d]}m_{\ell}-1}\prod_{\ell\in[d]}m_{\ell}x^{2}}{(1-k^{\prod_{\ell\in[d]}m_{\ell}}x)^{2}}.\end{equation}
\end{theorem}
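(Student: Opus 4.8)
The plan is to prove the two assertions separately, deriving the generating-function identity \eqref{eq;72} first and then extracting \eqref{eq;71} by a mix of base case and the recursion that \eqref{eq;72} encodes. For \eqref{eq;72}, I would start from Lemma \ref{lem;j1}, which gives $F_{d+1}(t,x,w)$ for each fixed base slice $w\in T(m^{(d)},k)$. Summing over all $w$ recovers $F_{d+1}(t,x):=\sum_{w}F_{d+1}(t,x,w)$, and then $A_{d+1}(x)$ is obtained as $\tfrac{\partial}{\partial t}F_{d+1}(t,x)\big|_{t=1}$, since differentiating $t^{\ell}$ and setting $t=1$ pulls out the exponent $\ell$, turning the coefficient of $x^{m_{d+1}}$ from a count of words into $\sum \des(w')$. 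The key intermediate object is $F_{d+1}(1,x)=\sum_{w}F_{d+1}(1,x,w)=k^{\prod_{\ell\in[d]}m_\ell}\cdot\frac{x}{1-k^{\prod m_\ell}x}=\frac{x}{1-k^{\prod m_\ell}x}\cdot k^{\prod m_\ell}$; more precisely, using the stated $F_{d+1}(1,x,w)=\frac{x}{1-k^{\prod_{\ell}m_\ell}x}$ for each of the $k^{\prod_\ell m_\ell}$ slices $w$.

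The main computation is to differentiate the summed form of \eqref{k1} (the equivalent equation from the proof of Lemma \ref{lem;j1}) with respect to $t$ at $t=1$. Writing $N=\prod_{\ell\in[d]}m_\ell$ and $S(w,w')=\sum_{i}1_{w(i)>w'(i)}$, equation \eqref{k1} summed over $w$ reads
\begin{equation*}
F_{d+1}(t,x)=x\sum_{w}t^{\des(w)}+x\sum_{w,w'}t^{\des(w)+S(w,w')}F_{d+1}(t,x,w').
\end{equation*}
Applying $\partial_t|_{t=1}$, and using $F_{d+1}(1,x,w')=\frac{x}{1-k^Nx}$ together with $\sum_{w}\des(w)=a_d$, I get a linear relation
\begin{equation*}
A_{d+1}(x)=x\,a_d+x\Big(k^N A_{d+1}(x)\cdot\tfrac{1}{k^N}\cdot? \ \Big)+\cdots,
\end{equation*}
so the real work is bookkeeping the three contributions: (i) from $\partial_t t^{\des(w)}$ in the first sum, giving $a_dx$; (ii) from $\partial_t t^{\des(w)+S(w,w')}$ in the second sum with $F_{d+1}(1,x,w')$ frozen, giving $x\big(\sum_{w,w'}(\des(w)+S(w,w'))\big)\frac{x}{1-k^Nx}$; (iii) from $t^{\des(w)+S(w,w')}$ frozen times $\partial_tF_{d+1}(t,x,w')$, giving $x\sum_{w}\!\sum_{w'}A_{d+1}(x,w')=x\,k^N A_{d+1}(x)$ after summing over the (identical-sum) $w$. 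The two combinatorial constants needed are $\sum_{w,w'}\des(w)=k^N a_d$ (the factor $k^N$ counting the free $w'$) and $\sum_{w,w'}S(w,w')=\sum_i\sum_{w,w'}1_{w(i)>w'(i)}=N\binom{k}{2}k^{2N-2}$, since for each of the $N$ positions $i$ the pair $(w(i),w'(i))$ is a decreasing pair in $\binom{k}{2}$ ways out of $k^2$, with the remaining $2N-2$ coordinates free. Substituting these and solving the resulting linear equation for $A_{d+1}(x)$ yields \eqref{eq;72} after simplifying $\binom{k}{2}=\tfrac12(k-1)k$.

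For \eqref{eq;71}, I would extract $a_{d+1}=\lim_{x\to\text{(coefficient extraction)}}$—more precisely, $a_{d+1}=\sum_{m_{d+1}\ge1}[\,\cdot\,]$? No: here $a_d$ for a \emph{fixed} tuple $m^{(d)}$ is a number, and \eqref{eq;72} is a generating function in the extra variable $x$ tracking $m_{d+1}$; so $a_{d+1}$ for the tuple $(m_1,\dots,m_{d+1})$ is $[x^{m_{d+1}}]A_{d+1}(x)$. Expanding $\frac{1}{(1-k^Nx)^2}=\sum_{j\ge0}(j+1)k^{Nj}x^j$ and reading off the coefficient of $x^{m_{d+1}}$ in the numerator $a_dx+\tfrac12(k-1)k^{2N-1}Nx^2$ gives
\begin{equation*}
a_{d+1}=a_d\,m_{d+1}k^{N(m_{d+1}-1)}+\tfrac12(k-1)N(m_{d+1}-1)k^{N(m_{d+1}-1)+N-1}.
\end{equation*}
Then I would check that the closed form \eqref{eq;71}, with $\prod_{\ell\in[d+1]}m_\ell=Nm_{d+1}$ and $\sum_{i\in[d+1]}\prod_{\ell\neq i}m_\ell=m_{d+1}\sum_{i\in[d]}\prod_{\ell\in[d],\ell\neq i}m_\ell+N$, satisfies this recursion, after substituting the inductive expression for $a_d$; this is a routine algebraic verification. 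The base case $d=1$ is the classical count of descents over all words in $[k]^{m_1}$, namely $a_1=\tfrac12(m_1-1)(k-1)k^{m_1-1}$, which matches \eqref{eq;71} at $d=1$ (there $d\prod m_\ell-\sum_i\prod_{\ell\neq i}m_\ell=m_1-1$). I expect the main obstacle to be the careful derivation of the linear equation for $A_{d+1}(x)$ after differentiation—in particular, justifying that $\partial_t F_{d+1}(t,x,w')|_{t=1}$ is independent of $w'$ when summed, and correctly combining the three terms—rather than the final inductive algebra, which is mechanical.
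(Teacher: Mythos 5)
Your proposal is correct and follows essentially the same route as the paper: differentiate the recurrence \eqref{k1} from Lemma \ref{lem;j1} at $t=1$, sum over the base slice $w$, evaluate $\sum_{w,w'}\sum_i 1_{w(i)>w'(i)}=\tfrac12(k-1)k^{2N-1}N$ (your $N\binom{k}{2}k^{2N-2}$ is the same quantity), solve the linear equation to get \eqref{eq;72}, and then extract $[x^{m_{d+1}}]$ and verify \eqref{eq;71} by induction on $d$ with the classical one-dimensional count as the base case. The bookkeeping of the three contributions and the final coefficient extraction both match the paper's computation.
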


\begin{proof}
We proceed by induction on $d$. The case $d=1$ is similar to the general case and the details are omitted. 
Assume that \eqref{eq;71} holds for $d$. In order to prove that it holds for $d+1$, we first prove \eqref{eq;72}. To this end, let $w\in T(m^{(d)},k)$. Differentiating \eqref{k1} with respect to $t$ and substituting $t=1$, we obtain
\begin{align}
&A_{d+1}(x,w)\nonumber\\&=\des(w)x+x\sum_{w'\in T(m^{(d)},k)}\left(A_{d+1}(x,w')+\left(\des(w)+\sum_{i\in[m_1]\times\cdots\times [m_d] }1_{w(i)>w'(i)}\right)F(1,x,w')\right)\nonumber\\&=\des(w)x+xA_{d+1}(x)+\left(k^{\prod_{\ell\in[d]} m_\ell}\des(w)+\sum_{w'\in T(m^{(d)},k)}\sum_{i\in[m_1]\times\cdots\times [m_d] }1_{w(i)>w'(i)}\right)\frac{x^2}{1-k^{\prod_{\ell\in[d]} m_\ell}x}.
\label{k2}\end{align}
Summing \eqref{k2} over $w\in T(m^{(d)},k)$ and solving for $A_{d+1}(x)$, we obtain
\begin{align}
A_{d+1}(x)&= \frac{a_dx}{1-k^{\prod_{\ell\in[d]} m_\ell}x}+\left(a_d k^{\prod_{\ell\in[d]} m_\ell} +\sum_{w,w'\in T(m^{(d)},k)}\sum_{i\in[m_1]\times\cdots\times [m_d] }1_{w(i)>w'(i)}\right)\frac{x^2}{(1-k^{\prod_{\ell\in[d]} m_\ell}x)^2}.\nonumber
\end{align}
Now, due to symmetry,
\begin{align}
\sum_{w,w'\in T(m^{(d)},k)}\sum_{i\in[m_1]\times\cdots\times [m_d] }1_{w(i)>w'(i)}
&=\prod_{\ell\in[d]} m_\ell\sum_{w'\in T(m^{(d)},k)}\sum_{w\in T(m^{(d)},k)}1_{w(1,\ldots,1)>w'(1,\ldots,1)}\nonumber\\
&=\prod_{\ell\in[d]} m_\ell k^{\prod_{\ell\in[d]} m_\ell-1}\sum_{w'\in T(m^{(d)},k)}(k-w'(1,\ldots,1))\nonumber  \\
&=\prod_{\ell\in[d]} m_\ell k^{2\prod_{\ell\in[d]} m_\ell-2}\sum_{w'(1,\ldots,1)=1}^k(k-w'(1,\ldots,1))\nonumber  \\
&=\frac{1}{2}(k-1)k^{2\prod_{\ell\in[d]} m_\ell-1}\prod_{\ell\in[d]} m_\ell\nonumber.
\end{align}
It follows that
\[A_{d+1}(x)= \frac{a_dx}{1-k^{\prod_{\ell\in[d]} m_\ell}x}+\left(a_d k^{\prod_{\ell\in[d]} m_\ell} +\frac{1}{2}(k-1)k^{2\prod_{\ell\in[d]} m_\ell-1}\prod_{\ell\in[d]} m_\ell\right)\frac{x^2}{(1-k^{\prod_{\ell\in[d]} m_\ell}x)^2},\] from which \eqref{eq;72} immediately follows.

Now we can prove that \eqref{eq;71} holds for $d+1$. Indeed, by \eqref{eq;72}, we have
\begin{align}
&A_{d+1}(x)\nonumber\\&=\frac{a_dx+\frac{1}{2}(k-1)k^{2\prod_{\ell\in[d]}m_{\ell}-1}\prod_{\ell\in[d]}m_{\ell}x^{2}}{(1-k^{\prod_{\ell\in[d]}m_{\ell}}x)^{2}}\nonumber\\
&=\left(a_dx+\frac{1}{2}(k-1)k^{2\prod_{\ell\in[d]}m_{\ell}-1}\prod_{\ell\in[d]}m_{\ell}x^{2}\right)\sum_{m_{d+1}\geq1}m_{d+1}k^{(m_{d+1}-1)\prod_{\ell\in[d]}m_{\ell}}x^{m_{d+1}-1}\nonumber\\
&=\sum_{m_{d+1}\geq 1}\frac{1}{2}\left(\left(d\prod_{\ell\in[d]}m_{\ell}-\sum_{i\in[d]}\prod_{\ell\in[d],\ell\neq i}m_{\ell}\right)m_{d+1}+\prod_{\ell\in[d]}m_{\ell}(m_{d+1}-1)\right)(k-1)k^{\prod_{\ell\in[d+1]}m_{\ell}-1}x^{m_{d+1}}\nonumber\\
&=\sum_{m_{d+1}\geq1}\frac{1}{2}\left((d+1)\prod_{\ell\in[d+1]}m_{\ell}-\sum_{i\in[d+1]}\prod_{\ell\in[d+1],\ell\neq i}m_{\ell}\right)(k-1)k^{\prod_{\ell\in[d+1]}m_{\ell}-1}x^{m_{d+1}}.\nonumber
\end{align}
Thus, $$a_{d+1}=\frac{1}{2}\left((d+1)\prod_{\ell\in[d+1]}m_{\ell}-\sum_{i\in[d+1]}\prod_{\ell\in[d+1],\ell\neq i}m_{\ell}\right)(k-1)k^{\prod_{\ell\in[d+1]}m_{\ell}-1}$$ and the proof is complete.
\end{proof}


\begin{remark}\label{rem;1}
For the proof of the result regarding the total number of levels, only a few minor modifications are necessary. 
First, equation \eqref{k1} needs to be replaced with
\[
F_{d+1}(t,x,w)=t^{\lev(w)}x+x\sum_{w'\in T(m^{(d)},k)}t^{\lev(w)+\sum_{i\in[m_1]\times\cdots\times [m_d] } 1_{w(i)=w'(i)}}F_{d+!}(t,x,w').
\]
Second, it is not hard to see that
\[
\sum_{w,w'\in T(m^{(d)},k)}\sum_{i\in[m_1]\times\cdots\times [m_d] }1_{w(i)=w'(i)}=k^{2\prod_{\ell\in[d]} m_\ell-1}\prod_{\ell\in[d]} m_\ell.
\]
\end{remark}

\subsection{Cyclic tensor words}

Let $m_{d+1}\in\N$ and let $\ell$ be a nonnegative integer. Set $m^{(d+1)}=(m_1,\ldots,m_{d+1})$. We denote by $D_{m^{(d)},m_{d+1},\ell}$ the number of $m^{(d+1)}$-tensor words $w$ such that $\cycdes(w)=\ell$. For $w_1,w_2\in T(m^{(d)},k)$, we denote by $D_{m^{(d)},m_{d+1},\ell}(w_1,w_2)$ the number of $m^{(d+1)}$-tensor words $w'$, such that $\cycdes(w')=\ell$ and such that $w'(i,r)=w_r(i)$, for every $i\in [m_1]\times\cdots\times [m_d]$ and $r=1,2$. Let $F_{d+1}(t, x)=\sum_{m_{d+1}\geq 1}\sum_{\ell\geq 0} D_{m^{(d)},m_{d+1},\ell}t^\ell x^{m_{d+1}}$ be the generating function of the $D_{m^{(d)},m_{d+1},\ell}$s, and, for $w_1,w_2\in T(m^{(d)},k)$, we denote by $F_{d+1}(t,x,w_1,w_2)=\sum_{m_{d+1}\geq 1}\sum_{\ell\geq 0}D_{m^{(d)},m_{d+1},\ell}(w)t^\ell x^{m_{d+1}}$ the generating function of the $D_{m^{(d)},m_{d+1},\ell}(w_1,w_2)$s. Notice that $$F_{d+1}(1,x,w_1,w_2)=\frac{x^2}{1-k^{\prod_{\ell\in[d]} m_\ell}x}.$$


The following lemma and theorem are the analogues of Lemma \ref{lem;j1} and Theorem \ref{thm;5s}, respectively. We state them without proof. 

\begin{lemma}\label{lem;j100}
For $w_1,w_2\in T(m^{(d)},k)$, the generating function $F(t,x,w_1,w_2)$ satisfies the equation
\begin{align}
&F_{d+1}(t,x,w_1,w_2)=t^{\cycdes(w_1w_2)}x^{2}+\nonumber\\&x\sum_{w'\in T(m^{(d)},k)}t^{\sum_{i\in[m_{1}]\times\cdots\times[m_{d}]}\left(1_{w_1(i)>w_2(i)}+1_{w_2(i)>w'(i)}-1_{w_1(i)>w'(i)}\right)+\cycdes(w')}F_{d+1}(t,x,w_1,w'),\nonumber
\end{align} where $w_1w_2$ is the tensor word $w'\in T(m^{(d+1)},k)$, with $m_{d+1}=2$ and $w'(i,r)=w_r$, for every $i\in [m_1]\times\cdots\times [m_d]$ and $r=1,2$.
\end{lemma}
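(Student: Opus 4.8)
The plan is to follow the proof of Lemma~\ref{lem;j1} almost line for line, the one genuinely new ingredient being the cyclic wrap-around in the freshly adjoined coordinate $e_{d+1}$. I identify an $m^{(d+1)}$-tensor word $v$ having $m_{d+1}=M$ with its sequence of slices $v_1,\dots,v_M\in T(m^{(d)},k)$, where $v_r(i)=v(i,r)$. Everything rests on the elementary decomposition
\[
\cycdes(v)=\sum_{r=1}^{M}\cycdes(v_r)+\sum_{r=1}^{M-1}\sum_{i\in[m_1]\times\cdots\times[m_d]}1_{v_r(i)>v_{r+1}(i)}+\sum_{i\in[m_1]\times\cdots\times[m_d]}1_{v_M(i)>v_1(i)},
\]
obtained by sorting the (cyclic) descents of $v$ according to their direction: those along some $e_\ell$ with $\ell\le d$ are exactly the cyclic descents of the individual slices, whereas those along $e_{d+1}$ are the transitions between consecutive slices, the pair $(v_M,v_1)$ being the cyclic one. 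Since the words counted by $F_{d+1}(t,x,w_1,w_2)$ have their first two slices prescribed, only lengths $M\ge 2$ contribute (indeed $F_{d+1}(1,x,w_1,w_2)=x^{2}/(1-k^{\prod_{\ell\in[d]}m_\ell}x)$).

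First I would record the term $M=2$: here the only admissible word is $w_1w_2$ itself, contributing $t^{\cycdes(w_1w_2)}x^{2}$, which is the first summand on the right-hand side. For $M\ge 3$ I would peel off the \emph{second} slice. Given an admissible $v$ with slices $w_1,w_2,v_3,\dots,v_M$, put $w'=v_3$ and let $u$ be the word with slices $w_1,v_3,v_4,\dots,v_M$; then $u$ is an admissible word of length $M-1$ for the pair $(w_1,w')$. Applying the displayed decomposition to $v$ and to $u$ and subtracting, the internal terms $\cycdes(v_3),\dots,\cycdes(v_M)$ and the transitions $v_r\to v_{r+1}$ for $3\le r\le M-1$ all cancel, and so does the cyclic transition $\sum_i 1_{v_M(i)>w_1(i)}$; this last cancellation is precisely why $w_1$ — rather than merely a single leading slice — must be held fixed, which is the reason two slices have to be prescribed in the first place. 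What survives is the internal contribution $\cycdes(w_2)$ of the deleted slice together with the effect of replacing the transitions $w_1\to w_2$ and $w_2\to w'$ by the single transition $w_1\to w'$, so that
\[
\cycdes(v)=\cycdes(u)+\cycdes(w_2)+\sum_{i\in[m_1]\times\cdots\times[m_d]}\bigl(1_{w_1(i)>w_2(i)}+1_{w_2(i)>w'(i)}-1_{w_1(i)>w'(i)}\bigr),
\]
which is the exponent of $t$ that enters the recurrence.

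Finally, because this exponent depends only on $w_1$, $w_2$ and $w'=v_3$ and not on $v_4,\dots,v_M$, summing over $w'\in T(m^{(d)},k)$ and then over all choices of $v_4,\dots,v_M$ and all $M\ge 3$ factors: the inner sum $\sum_{M\ge 3}x^{M}\sum_{v_4,\dots,v_M}t^{\cycdes(u)}$ equals $x\,F_{d+1}(t,x,w_1,w')$, with the extra factor $x$ accounting for the removed slice $w_2$, exactly as in the noncyclic bookkeeping of Lemma~\ref{lem;j1}. Combining this with the $M=2$ term yields the asserted functional equation. The step I expect to require the most care is the cyclic decomposition of $\cycdes(v)$ itself, together with the verification that deleting the interior slice $w_2$ disturbs neither the other slices' internal cyclic descents nor the wrap-around transition $(v_M,v_1)$; once that is pinned down the rest is a routine repackaging of the proof of Lemma~\ref{lem;j1}, and the invariance of $\cycdes$ under cyclic shifts of the slices in the $e_{d+1}$ direction provides a convenient consistency check along the way.
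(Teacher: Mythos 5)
The paper states this lemma without proof, so there is no argument of the authors' to compare against; your slice decomposition of $\cycdes$ and the ``delete the second slice'' peeling are exactly the natural route, and each individual step of your argument is sound. The problem is the last line: your telescoping produces the exponent $\cycdes(w_2)+\sum_{i}\bigl(1_{w_1(i)>w_2(i)}+1_{w_2(i)>w'(i)}-1_{w_1(i)>w'(i)}\bigr)$, whereas the lemma as printed has $\cycdes(w')$ where you have $\cycdes(w_2)$. These are not the same quantity, so your claim that this ``yields the asserted functional equation'' is false as written --- what you have proved is a \emph{different} identity. In fact it is the printed statement that is wrong and your version that is correct. Concretely, take $d=1$, $m^{(1)}=(2)$, $k=2$, $w_1=(1,2)$, $w_2=(1,1)$, $w'=(1,2)$ (columns of a $(2,3)$-tensor word, read top to bottom). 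The word $\begin{pmatrix}1&1&1\\2&1&2\end{pmatrix}$ has $\cycdes=3$ (two column wrap-around descents, one row descent), but the printed recurrence assigns it $t$-exponent $E+\cycdes(w')+\cycdes(w_1w')=1+1+2=4$, while your exponent $E+\cycdes(w_2)+\cycdes(w_1w')=1+0+2=3$ is correct.

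So the substance of your proof is right, but you should not present it as a proof of the lemma verbatim: you should flag the discrepancy and state the corrected identity, with $\cycdes(w_2)$ in place of $\cycdes(w')$ in the exponent. It is worth adding the observation that the error is harmless for the theorem that follows, since there one differentiates at $t=1$ and sums over all $w_1,w_2\in T(m^{(d)},k)$, and $\sum_{w_2,w'}\cycdes(w')=\sum_{w_2,w'}\cycdes(w_2)$, so the formulas for $g_{d}$ and $G_{d+1}(x)$ are unaffected. The rest of your write-up --- the decomposition of $\cycdes(v)$ into the slices' internal cyclic descents plus the $M-1$ consecutive transitions plus the single wrap-around transition, the bijection $v\leftrightarrow(w',u)$, and the bookkeeping that converts the sum over $M\ge 3$ into $x\,F_{d+1}(t,x,w_1,w')$ --- is correct and needs no change.
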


\begin{theorem}
Define $g_d=\sum_{w\in T(m^{(d)},k)}\cycdes(w)$ and let $G_d(x)$ be the corresponding generating function. Then
\[
g_d=\frac{1}{2}\left(d\prod_{\ell\in[d]}m_{\ell}\right)(k-1)k^{\prod_{\ell\in[d]}m_{\ell}-1}
\]
and
\begin{align}
G_{d+1}(x)&=\frac{2\left(\frac{1}{2}(k-1)k^{2\prod_{\ell\in[d]}m_{\ell}-1}\prod_{\ell\in[d]}m_{\ell}+k^{\prod_{\ell\in[d]}m_{\ell}}g_{d}\right)}{(1-k^{\prod_{\ell\in[d]}m_{\ell}}x)^{2}}x^{2}\nonumber\\&-\frac{\left(\frac{1}{2}(k-1)k^{3\prod_{\ell\in[d]}m_{\ell}-1}\prod_{\ell\in[d]}m_{\ell}+k^{2\prod_{\ell\in[d]}m_{\ell}}g_{d}\right)}{(1-k^{\prod_{\ell\in[d]}m_{\ell}}x)^{2}}x^{3}.\nonumber
\end{align}
\end{theorem}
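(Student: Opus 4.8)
The plan is to follow the proof of Theorem~\ref{thm;5s} almost verbatim, using Lemma~\ref{lem;j100} in place of Lemma~\ref{lem;j1} and arguing by induction on $d$; throughout, write $P=\prod_{\ell\in[d]}m_\ell$ and $q=k^{P}$. The base case $d=1$ is a direct count: among the $k^{m_1}$ cyclic words of length $m_1$, each of the $m_1$ cyclic edges is a descent in $\binom{k}{2}k^{m_1-2}$ of them, so $g_1=\tfrac12 m_1(k-1)k^{m_1-1}$ (for $m_1\ge 2$, so that the wraparound edge compares distinct cells).

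For the inductive step, assume the formula for $g_d$. Fix $w_1,w_2\in T(m^{(d)},k)$, differentiate the functional equation of Lemma~\ref{lem;j100} with respect to $t$, and set $t=1$; using $F_{d+1}(1,x,w_1,w')=x^{2}/(1-qx)$ this yields a recursion for $\partial_t F_{d+1}(t,x,w_1,w_2)|_{t=1}$ in terms of the same quantities with $w_2$ replaced by an arbitrary $w'$. Summing over all $w_1,w_2\in T(m^{(d)},k)$ (the copy of $w_2$ in the linear term becomes a free index, contributing a factor $q$) collapses the recursion into
\[
G_{d+1}(x)=S_{1}x^{2}+\frac{S_{2}x^{3}}{1-qx}+xq\,G_{d+1}(x),
\]
where $S_{1}=\sum_{w_1,w_2}\cycdes(w_1w_2)$ and $S_{2}=\sum_{w_1,w_2,w'}\bigl(\sum_{i}(1_{w_1(i)>w_2(i)}+1_{w_2(i)>w'(i)}-1_{w_1(i)>w'(i)})+\cycdes(w')\bigr)$; solving gives $G_{d+1}(x)=\frac{S_{1}x^{2}}{1-qx}+\frac{S_{2}x^{3}}{(1-qx)^{2}}$.

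Both sums are then evaluated by the symmetry arguments already used in the proof of Theorem~\ref{thm;5s}. Decomposing $\cycdes(w_1w_2)$ into the cyclic descents of $w_1$ and of $w_2$ in directions $e_1,\dots,e_d$ (summing to $2qg_d$) plus the forward and wraparound edges in direction $e_{d+1}$, and invoking the identity $\sum_{w,w'}\sum_i 1_{w(i)>w'(i)}=\tfrac12(k-1)k^{2P-1}P$ from the proof of Theorem~\ref{thm;5s}, gives $S_{1}=(k-1)k^{2P-1}P+2qg_{d}$; the same bookkeeping, now with one of the three words free, gives $S_{2}=\tfrac12(k-1)k^{3P-1}P+q^{2}g_{d}$. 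One checks directly that $qS_{1}=2S_{2}$, which rewrites $\frac{S_{1}x^{2}}{1-qx}$ as $\frac{S_{1}x^{2}-2S_{2}x^{3}}{(1-qx)^{2}}$ and hence $G_{d+1}(x)=\frac{S_{1}x^{2}-S_{2}x^{3}}{(1-qx)^{2}}$; spelling out $S_1$ and $S_2$ is the displayed formula. Extracting the coefficient of $x^{m_{d+1}}$ via $(1-qx)^{-2}=\sum_{n\ge0}(n+1)q^{n}x^{n}$, inserting $g_{d}=\tfrac12 dP(k-1)k^{P-1}$, and using $Pm_{d+1}=\prod_{\ell\in[d+1]}m_\ell$, the coefficient collapses to $g_{d+1}=\tfrac12(d+1)\bigl(\prod_{\ell\in[d+1]}m_\ell\bigr)(k-1)k^{\prod_{\ell\in[d+1]}m_\ell-1}$, completing the induction.

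The calculations themselves are routine; the two points requiring care are the bookkeeping in decomposing $\cycdes(w_1w_2)$ — the forward edge $((i,1),(i,2))$ and the wraparound edge $((i,2),(i,1))$ are distinct ordered pairs, so both must be counted — and spotting the identity $qS_{1}=2S_{2}$, which is precisely what is needed to fold the generating function into the compact closed form stated in the theorem.
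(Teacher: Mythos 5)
Your proposal is correct and takes essentially the route the paper intends (the theorem is stated there without proof as the analogue of Theorem \ref{thm;5s}): in your notation, the differentiated recursion from Lemma \ref{lem;j100} does collapse to $G_{d+1}(x)=\frac{S_1x^2}{1-qx}+\frac{S_2x^3}{(1-qx)^2}$, your evaluations $S_1=(k-1)k^{2P-1}P+2qg_d$ and $S_2=\tfrac12(k-1)k^{3P-1}P+q^2g_d$, the identity $qS_1=2S_2$, and the coefficient extraction yielding $g_{d+1}$ all check out (as does the base case, with the implicit standing assumption $m_\ell\ge 2$ needed for the wraparound edges to be nondegenerate). The one caveat is that the exponent in Lemma \ref{lem;j100} as printed carries $\cycdes(w')$ where the layer-deletion bookkeeping actually produces $\cycdes(w_2)$, but since $\sum_{w_1,w_2,w'}\cycdes(w')=\sum_{w_1,w_2,w'}\cycdes(w_2)=q^2g_d$, this discrepancy washes out of $S_2$ and leaves your conclusion intact.
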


\subsection{The maximal number of descents}

\begin{lemma}\label{lem;d10}
We have $$\max\{\des(x)\;:\;x\in[k]^n\}=\max\{\cycdes(x)\;:\;x\in[k]^n\}=\left\lfloor \frac{n(k-1)}{k} \right\rfloor.$$
\end{lemma}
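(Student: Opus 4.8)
The plan is to prove both the upper bound and its attainability for $\des$ and $\cycdes$ simultaneously, since the cyclic count dominates the linear one. First I would establish the upper bound for $\cycdes$. The key observation is that in a cyclic word $x=x_1\cdots x_n\in[k]^n$ there are exactly $n$ consecutive cyclic pairs $(x_1,x_2),(x_2,x_3),\ldots,(x_{n-1},x_n),(x_n,x_1)$, and a position contributes to $\cycdes(x)$ only if the corresponding pair is a strict descent. The natural approach is to partition the cyclic sequence of positions according to the value $x_i\in[k]$: let $c_v=\#\{i : x_i=v\}$, so $\sum_{v\in[k]}c_v=n$. For a fixed letter $v$, consider the maximal cyclic runs of positions whose value is $\geq v$; more cleanly, I would count, for each value $v\in\{2,\ldots,k\}$, the number of cyclic descents from a letter $\geq v$ to a letter $<v$. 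Summing a "staircase" style inequality, or directly: between two successive occurrences (cyclically) of letters that are $\le$ some threshold, at most one descent can occur. A cleaner route: observe that if $x_i > x_{i+1}$ then going around the cycle we must eventually come back up, so descents and ascents-or-levels interleave; in particular the number of strict descents is at most $n$ minus the number of cyclic positions $i$ with $x_i\le x_{i+1}$. Since the letter achieving the global maximum value, say occurring $c$ times, never starts a descent into something larger, and more importantly the minimum letter never is the target's... — the sharp bound comes from: each of the $\lceil n/k\rceil$-or-so "blocks" forces at least one non-descent. Concretely, I would show $\cycdes(x)\le n - \lceil n/k\rceil = \lfloor n(k-1)/k\rfloor$ by arguing that among any $k$ cyclically consecutive positions, at least one pair $(x_i,x_{i+1})$ is a non-descent (i.e.\ $x_i\le x_{i+1}$): if all $k$ pairs were strict descents we would have $x_i > x_{i+1} > \cdots > x_{i+k}$, forcing $k+1$ distinct values among $\le k$ available, a contradiction. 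Partitioning the $n$ cyclic pairs into $\lceil n/k\rceil$ groups, each of size $\le k$, each containing a non-descent, yields at least $\lceil n/k\rceil$ non-descents, hence at most $n-\lceil n/k\rceil$ descents; and $n-\lceil n/k\rceil = \lfloor n(k-1)/k\rfloor$.

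Next I would prove attainability, i.e.\ exhibit $x\in[k]^n$ with $\des(x)=\cycdes(x)=\lfloor n(k-1)/k\rfloor$ — note that since a cyclic descent at position $n$ only adds to the count, it suffices to find $x$ with $\des(x)$ already equal to the bound and with $x_n\le x_1$ so that no cyclic descent is created, giving equality for both statistics. Write $n = qk + r$ with $0\le r < k$, so $\lfloor n(k-1)/k\rfloor = q(k-1) + \max(r-1,0) = qk + r - q - 1_{r\ge 1} = n - q - 1_{r\ge 1} = n - \lceil n/k \rceil$. The construction: repeat the descending block $k, k-1, \ldots, 2, 1$ a total of $q$ times, then append $k, k-1, \ldots, k-r+1$ if $r\ge 1$ (or stop if $r=0$). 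Each internal block of the form $k,k-1,\ldots,1$ contributes $k-1$ descents, and the junction from a $1$ to the next $k$ is a (non-)ascent contributing $0$; the final partial block $k,\ldots,k-r+1$ contributes $r-1$ descents and its junction from the preceding $1$ up to $k$ again contributes $0$. Total: $q(k-1) + (r-1)\cdot 1_{r\ge1} = \lfloor n(k-1)/k\rfloor$. Finally, $x_n\in\{1\}\cup\{k-r+1\}$ and $x_1=k$, so $x_n \le x_1$ and no cyclic descent is added; thus $\cycdes(x)=\des(x)=\lfloor n(k-1)/k\rfloor$. Combined with the upper bound (which holds for $\cycdes$, hence a fortiori for $\des$), this gives both maxima.

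The step I expect to require the most care is the clean verification of the counting identity $n-\lceil n/k\rceil=\lfloor n(k-1)/k\rfloor$ together with the edge cases $r=0$ versus $r\ge 1$ in both the bound and the construction — it is routine but must be stated precisely so that the $q$ full blocks, the partial block, and the floor/ceiling juggling all line up. The pigeonhole argument itself ("$k$ consecutive strict descents would need $k+1$ distinct values") is short; the only subtlety is confirming that partitioning $n$ cyclic pairs into consecutive groups of size $\le k$ really produces $\lceil n/k\rceil$ groups each containing at least one non-descent, which is immediate once one notes the groups are intervals of cyclically consecutive positions.
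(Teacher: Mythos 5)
Your lower-bound construction is correct, and reducing the cyclic maximum to the linear one by arranging $x_n\le x_1$ works. The gap is in the upper bound. Your pigeonhole observation --- that $k$ consecutive strict descents would force $x_i>x_{i+1}>\cdots>x_{i+k}$, i.e.\ $k+1$ distinct letters --- guarantees a non-descent only in a group of \emph{exactly} $k$ consecutive pairs (or in the full cycle). When $k\nmid n$, any partition of the $n$ cyclic pairs into $\lceil n/k\rceil$ consecutive groups of size at most $k$ necessarily contains a group of size strictly less than $k$, and nothing forces that group to contain a non-descent. So the partition argument as written only certifies $\lfloor n/k\rfloor$ non-descents, i.e.\ $\cycdes(x)\le n-\lfloor n/k\rfloor$, which exceeds the claimed bound $\lfloor n(k-1)/k\rfloor=n-\lceil n/k\rceil$ by one whenever $k\nmid n$. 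For instance, with $n=3$, $k=2$ the partition is into groups of sizes $2$ and $1$; the size-$1$ group could a priori be a single descent, so you only get $\cycdes\le 2$, while the true maximum is $1$.

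The argument is repairable in either of two standard ways: (i) slide a window --- for $n\ge k$, each of the $n$ cyclic windows of $k$ consecutive pairs contains a non-descent, and each non-descent lies in exactly $k$ windows, so the number $s$ of non-descents satisfies $sk\ge n$, hence $s\ge\lceil n/k\rceil$ (for $n<k$, note that all $n$ cyclic pairs being descents would give $x_1>x_2>\cdots>x_n>x_1$, so $s\ge 1=\lceil n/k\rceil$); or (ii) note that the descent positions form at most $s$ maximal cyclic runs, each of length at most $k-1$ by your pigeonhole, so $n-s\le s(k-1)$, i.e.\ $s\ge n/k$. With either fix your proof goes through, and it is genuinely different from the paper's, which reduces descent counting to binary sequences with at most $k-1$ consecutive ones via a pair of explicit maps $\varphi,\theta$ and then handles the cyclic case by rotating the word; your route, once patched, is more self-contained and treats the cyclic bound directly.
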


\begin{proof}
Let $C(n, k)$ denote the set of all binary sequences of length $n$ having at most $k$ consecutive $1$s. For $y\in C(n,k)$ let us denote by $|y|$ the number of $1$s in $y$. We first show that
\begin{equation}\label{eq;108}
\max\{|y|\;:\;y\in C(n,k)\}=\left\lfloor (n+1)k/(k+1) \right\rfloor
\end{equation} (see \seqnum{A182210} in \cite{SL}). Write $n = \sigma(k+1) + \rho$, where $0\leq \rho< k+1$. We then have the sequence $$y=\overbrace{\overbrace{1\cdots 1}^{k\textnormal{ times}}0\cdots \overbrace{1\cdots 1}^{k\textnormal{ times}}0}^{\sigma\textnormal{ times}}\overbrace{1\cdots 1}^{\rho\textnormal{ times}}\in C(n,k),$$ satisfying $$|y|=n-\sigma=n-\left\lfloor \frac{n}{k+1}\right\rfloor=\left\lfloor \frac{(n+1)k}{k+1}\right\rfloor.$$ If $\sigma=0$ then $n\leq k$ and $y$ consists solely of $n$ $1$s, which is obviously maximal. Assume that $\sigma\geq 1$. Then, treating the $0$s as separators between pigeonholes and the $1$s as pigeons, for every $t\in [\sigma]$, we have $$n-(\sigma-t)=(\sigma-t+1)k+\overbrace{(t-1)k+\rho+t}^{\geq 1}.$$ By the pigeonhole principle, in any binary sequence consisting of $\sigma-t$ $0$s and $n-(\sigma-t)$ $1$s, there exists at least one subsequence of $k+1$ consecutive $1$s.

Having proved \eqref{eq;108}, we now show the connection to descents in words. To this end, we construct maps $\varphi\colon [k]^n\to C(n-1,k-1)$ and $\theta\colon C(n-1,k-1)\to [k]^n$ such that $|\varphi(x)| = \des(x)$ and $\des(\theta(y))=|y|$. Let $x=x_1\cdots x_n\in[k]^n$ and define a binary sequence $\varphi(x)=y=y_1\cdots y_{n-1}$ of length $n-1$ as follows: For $i\in[n-1]$ we set $y_i=1_{x_i>x_{i+1}}$. Clearly, $y\in C(n-1,k-1)$ and  $\des(x)=|y|$. Conversely, let $y=y_1\cdots y_{n-1}\in C(n-1,k-1)$ and let $z_i$ (resp.\ $o_i$) be the length of the $i$th sequence of consecutive $0$s (resp.\ $1$s) in $y$, where $i\in[r]$ for some $r\in\N$. We define $\theta(y)=x=v_1\cdots v_r$, where $$v_i=\begin{cases} \overbrace{o_1+1, \ldots, o_1+1}^{z_1+1\textnormal{ times}}, o_1,o_1-1,\ldots, 1&\textnormal{if } i=1;\\\overbrace{o_i+1, \ldots, o_i+1}^{z_i\textnormal{ times}}, o_i,o_i-1,\ldots, 1&\textnormal{otherwise}.\end{cases}$$
First, we notice that $v_1$ is of length $z_1 + o_1 +1$ and $v_i$ is of length $z_i+o_i$, for $2\leq i\leq r$. Thus, $x$ is of length $n$, since $\sum_{i\in[r]}(z_i+o_i)=n-1$. Second. $0\leq o_i\leq k-1$, for every $i\in[r]$. Thus, $x\in[k]^n$. Finally, since $z_i>0$ for every $2\leq i\leq r$, the number of descents occurring in $v_i$ is $o_i$. Notice that every $v_i$  ends with $1$, so no descents occur in the transition between $v_i$ and $v_{i+1}$. It follows that $\des(x)=\sum_{i\in[r]}o_i=|y|$.
We have thus proved that $\max\{\des(x)\;:\;x\in[k]^n\}=\left\lfloor n(k-1)/k \right\rfloor$. Trivially, $\max\{\des(x)\;:\;x\in[k]^n\}\leq \max\{\cycdes(x)\;:\;x\in[k]^n\}$. On the other hand, let $x\in[k]^n$. Necessarily, there exists $i\in[n]$ such that $x_i\leq x_j$, where $j=i+1$ if $i<n$ and $j=1$, otherwise. Let $x'$ be the rotation of $x$ such the first letter of $x'$ is $x_j$. Then $\cycdes(x)=\cycdes(x')=\des(x')$. Conclude that $\max\{\cycdes(x)\;:\;x\in[k]^n\}\leq \max\{\des(x)\;:\;x\in[k]^n\}$ and the proof is complete.
\end{proof}

\begin{corollary}
Let $w\in T(m^{(d)},k)$. Then
\begin{equation}\label{eq;5w}
\cycdes(w)\leq\sum_{i\in[d]}\left(\prod_{\ell\in[d],\ell\neq i}m_{\ell}\right)\left\lfloor \frac{m_{i}(k-1)}{k}\right\rfloor.
\end{equation}
\end{corollary}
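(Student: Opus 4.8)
The plan is to reduce inequality \eqref{eq;5w} to the one-dimensional estimate of Lemma~\ref{lem;d10} by splitting $\cycdes(w)$ first according to the direction of each (cyclic) descent and then, within a fixed direction, according to the line of the tensor on which it occurs.

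First I would note that, by Definition~\ref{def;1}, every descent and every cyclic descent of $w$ is attached to a unique coordinate direction: a descent is a pair $(j,j+e_i)$ for some $i\in[d]$, and a cyclic descent is a pair $(j,j')$ for which there is a unique $i\in[d]$ with $j_i=m_i$, $j'_i=1$ and $j_\ell=j'_\ell$ for $\ell\neq i$. Hence $\cycdes(w)=\sum_{i\in[d]}c_i(w)$, where $c_i(w)$ counts the descents and cyclic descents of $w$ in direction $e_i$.

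Next, fix $i\in[d]$. For each $a=(a_\ell)_{\ell\in[d],\ell\neq i}\in\prod_{\ell\in[d],\ell\neq i}[m_\ell]$, let $x^{a}=x^{a}_1\cdots x^{a}_{m_i}\in[k]^{m_i}$ be the word given by $x^{a}_t=w(b)$, where $b_i=t$ and $b_\ell=a_\ell$ for $\ell\neq i$; that is, $x^a$ is the restriction of $w$ to the line through the tensor in the $e_i$ direction indexed by $a$. A descent of $w$ in direction $e_i$ on this line is exactly an index $t\in[m_i-1]$ with $x^a_t>x^a_{t+1}$, and the cyclic descent of $w$ in direction $e_i$ on this line occurs precisely when $x^a_{m_i}>x^a_1$; moreover each descent or cyclic descent in direction $e_i$ lies on exactly one such line. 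Therefore $c_i(w)=\sum_{a}\cycdes(x^a)$, the sum ranging over all $\prod_{\ell\in[d],\ell\neq i}m_\ell$ choices of $a$.

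Finally, Lemma~\ref{lem;d10} gives $\cycdes(x^a)\leq\left\lfloor m_i(k-1)/k\right\rfloor$ for every $a$, so $c_i(w)\leq\left(\prod_{\ell\in[d],\ell\neq i}m_\ell\right)\left\lfloor m_i(k-1)/k\right\rfloor$; summing over $i\in[d]$ yields \eqref{eq;5w}. I do not expect a genuine obstacle here: the only point needing care is the bookkeeping claim that each (cyclic) descent of $w$ is counted in exactly one direction and on exactly one line, which is immediate from the uniqueness of $\ell$ in Definition~\ref{def;1}.
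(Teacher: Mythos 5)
Your proof is correct and is essentially the paper's argument: both reduce \eqref{eq;5w} to Lemma \ref{lem;d10} applied to the one-dimensional words obtained by restricting $w$ to the lines parallel to each coordinate direction. The only difference is organizational: the paper packages the same line-by-line count as an induction on $d$, peeling off the direction $e_{d+1}$ at each step, whereas you carry out the full decomposition by direction and by line in a single pass.
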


\begin{proof}
We proceed by induction on $d$. The case $d=1$ follows from Lemma \ref{lem;d10}. Assume that \eqref{eq;5w} holds for $d$ and let $w\in T(m^{(d+1)},k)$. Then
\begin{align}
\cycdes(w)&\leq m_{d+1}\max\{\cycdes(w')\;:\;w'\in T(m^{(d)},k)\}+\left(\prod_{\ell\in[d]}m_\ell\right)\left\lfloor \frac{m_{d+1}(k-1)}{k}\right\rfloor\nonumber\\
&\leq m_{d+1}\sum_{i\in[d]}\left(\prod_{\ell\in[d],\ell\neq i}m_{\ell}\right)\left\lfloor \frac{m_{i}(k-1)}{k}\right\rfloor+\left(\prod_{\ell\in[d]}m_\ell\right)\left\lfloor \frac{m_{d+1}(k-1)}{k}\right\rfloor\nonumber\\
&=\sum_{i\in[d+1]}\left(\prod_{\ell\in[d+1],\ell\neq i}m_{\ell}\right)\left\lfloor \frac{m_{i}(k-1)}{k}\right\rfloor, \nonumber
\end{align} as required.
\end{proof}

\begin{example}
Let $w\in T(m^{(2)},k)$. Then $$\cycdes(w)\leq m_1\left\lfloor \frac{m_2(k-1)}{k} \right\rfloor+m_2\left\lfloor \frac{m_1(k-1)}{k} \right\rfloor.$$
\end{example}

\begin{example}
We have
$$\max\{\cycdes(w)\;:\;w\in T((3,4),3)\}=14.$$
Indeed,
\[\cycdes\begin{pmatrix}
1&1&3&2\\
3&3&2&1\\
2&2&1&3
\end{pmatrix}=14.\] On the other hand, \[3\left\lfloor \frac{4\cdot 2}{3} \right\rfloor+4\left\lfloor \frac{3\cdot 2}{3} \right\rfloor=14. \]

\end{example}

\begin{thebibliography}{9}

\bibitem{B}
J.-L.~Baril, S.~Kirgizov, and V.~Vajnovszki, Descent distribution on Catalan words avoiding a pattern of length at most three, {\it Discrete Math.} {\bf 341} (2018), 2608--2615.

\bibitem{Kit}
S.~Kitaev, T.~Mansour, and J.~Remmel, Counting descents, rises, and levels, with prescribed first element, in words, {\it Discrete Math. Theor. Comput. Sci.} {\bf 10} (2008), 1--22.

\bibitem{K} A.~Knopfmacher, T.~Mansour, A.~Munagi, and H.~Prodinger, Staircase words and Chebyshev polynomials, {\it Appl.~Anal.~Discrete Math.} {\bf 4} (2010), 81--95.

\bibitem{MS}
T.~Mansour and M.~Shattuck, Counting pairs of words according to the number of common rises, levels, and descents, {\it Online J.~Anal.~Comb.} {\bf 9} (2014), 1--18.

\bibitem{SL}
N.~J.~A.~Sloane, The On-Line   Encyclopedia of Integer   Sequences, OEIS  Foundation  Inc., \url{https://oeis.org}.

\end{thebibliography}
\end{document}